\numberwithin{equation}{section}
\newtheorem{Lemma}{Lemma}[section]
\newtheorem{Thm}{Theorem}[section]
\begin{document}
\title{Legendre Hyperelliptic integrals, $\pi$ new formulae and Lauricella functions through the elliptic singular moduli\footnote{To appear  in Journal of Numebr Theory}}
\author{Giovanni Mingari Scarpello \footnote{giovannimingari@yahoo.it}
\and Daniele Ritelli \footnote{Dipartimento scienze statitistiche, viale
Filopanti, 5 40127 Bologna Italy, daniele.ritelli@unibo.it}}
\date{}
\maketitle

\begin{abstract}
This paper, pursuing the work started in \cite{jnt1} and \cite{jnt2}, holds six new formulae for $\pi$, see equations \eqref{pi}, through ratios of first kind complete elliptic integrals and some values of  hypergeometric functions of three or four variables of Lauricella type ${\rm F}_D^{(n)}$. This will be accomplished by reducing some hyperelliptic integrals to elliptic through the methods Legendre taught in his treatise, \cite{leg, leg2}.
The complete elliptic integrals of first kind have complementary moduli: as a consequence we can find their ratio through the Lauricella ${\rm F}_D^{(3)}$ functions. In such a way we succeed in obtaining, through the theory of elliptic singular moduli, some particular values of Lauricella's themselves.

\vspace{0.5cm}

\noindent {\sc Keyword:} Reduction of Hyperelliptic Integrals; Complete Elliptic Integral of first kind; $\pi$; Hypergeometric Functions; Lauricella  Function; Elliptic Singular Moduli.
\end{abstract}

\section{Introduction and aim of the work}
In the first volume of his {\it Trait\'e} \cite{leg}, composed of 35 chapters, Legendre 
devotes more than a half of it to the elliptic integrals of first, second and third kind, showing their properties and explaining how to compute them by series, moduli transformation and so on. In chapter XXIV he starts with the {\it reduction}, namely  transforming those integrals nowadays named hyperelliptic, in somewhat else which is either elliptic or a combination of elliptic and elementary integrals. They come as a generalization of: 
\[
\int R\left(x, \sqrt{g(x)}\right)\,{\rm d}x
\]
where $R$ is a rational function, whenever the polynomial $g(x)$ has degree $\geq 5$ without multiple factors. For instance, Chapter XXVI deals with the reduction of
\[
\int\frac{{\rm d}x}{(1\mp px^2) {\sqrt[3]{1\mp qx^2}}}
\]
which, by means of ingenuous changes of variables is led to a sum of other integrals either elementary or elliptic.
Chapter XXVII reduces the integration of
\[
\int\frac{m+x^2} {\sqrt{\alpha+\beta x+\gamma x^2+\varepsilon x^4}}{\rm d}x
\]
to a sum of elliptic integrals of first and second kind. After this, he passes to the rather unaccustomed integration of
\[
\int\frac{{\rm d}\varphi}{\sqrt[m]{1-k^2\sin^2 \varphi}},\quad m=3,\, 4\quad\text{and of}\quad
\int\sqrt{\frac{1-k^2\sin^2 \varphi}{\sin\varphi}}{\rm d}\varphi
\]
Of course we cannot mention all his reductions, but we add only this last example, included in his second volume (pp. 382-398), namely
\[
\int \frac{{\rm d}x}{\sqrt{1\mp x^{m}}},\quad m=6,\, 8,\,12.
\]
Legendre in his {\it Trait\'{e}}, \cite{leg}, computes as elliptic those integrals which had been known in literature as Eulerian ones, namely computable by means of Gamma and Beta functions. Really he does so in order to get identities between complete elliptic integrals of different moduli.
We are doing somewhat similar but with reference to hyperelliptic ones: more precisely, we are interested in the reduction of six hyperelliptic definite integrals depending on parameters $a>b>0$:

\begin{subequations}\label{r}
\begin{minipage}[b]{0.45\textwidth}
\begin{align} \label{r1}
I_1(a,b)=&\int_0^\infty\frac{{\rm d}p}{\sqrt{p(p^2+a^2)(p^2+b^2)}}\\
I_2(a,b)=&\int_0^\infty\sqrt{\frac{p}{(p^2+a^2)(p^2+b^2)}}\,{\rm d}p\label{r2}\\
I_3(a,b)=&\int_a^\infty\frac{{\rm d}p}{\sqrt{p(p^2-a^2)(p^2-b^2)}}\label{r3}
\end{align}
\end{minipage}
\hfill
\begin{minipage}[b]{0.45\textwidth}
\begin{align} 
I_4(a,b)=&\int_a^\infty\sqrt{\frac{p}{(p^2-a^2)(p^2-b^2)}}\,{\rm d}p\label{r4}\\
I_5(a,b)=&\int_0^b\frac{{\rm d}p}{\sqrt{p(p^2-a^2)(p^2-b^2)}}\label{r5}\\
I_6(a,b)=&\int_0^b\sqrt{\frac{p}{(p^2-a^2)(p^2-b^2)}}\,{\rm d}p\label{r6}
\end{align}
\end{minipage}
\end{subequations}

\noindent  
 In chapter XXIX  of {\it Trait\'{e}} Legendre deals with some identities among first kind complete elliptic integrals of different moduli. Having been previously established the famous  modular transformation:
\begin{equation}\label{landen}
\mathbf{K}(k)=\frac{1}{1+k}\mathbf{K}\left(\frac{2\sqrt{k}}{1+k}\right)
\end{equation}
 where 
 \begin{equation}
\mathbf{K}(k)=\int_{0}^{1}\frac{\mathrm{d}u}{\sqrt{(1-u^{2})(1-k^{2}u^{2})}}
\end{equation}
 is the complete elliptic integral of first kind, with modulus $|k|<1$,  at p. 192 he writes the identity (which in modern notation becomes):
\begin{equation}\label{p192}
\mathbf{K}\left(\frac{\sqrt2-\sqrt[4]3}{1+\sqrt3}\right)=\frac{\sqrt2}{\sqrt[4]{27}(\sqrt3-1)}\mathbf{K}\left(\frac{1}{\sqrt2}\right).
\end{equation}
In order to get \eqref{p192}, Legendre faces by two different means the hyperelliptic integral:
\begin{equation}\label{start}
\mathbb{X}=\int_0^1\frac{x^2}{\sqrt{1-x^{12}}}\,{\rm d}x
\end{equation}
 Legendre was aware that integral \eqref{start} could be evaluated by means of Gamma function, as can be read nowadays in  formula 3.251.1 of \cite{gr}
\[
\mathbb{X}=\frac{\sqrt{\pi }}{12}\,\frac{\Gamma \left(\frac{1}{4}\right)}{\Gamma
   \left(\frac{3}{4}\right)}=\frac{\Gamma^2 \left(\frac{1}{4}\right)}{12 \sqrt{2 \pi }}
\]
but providing $\mathbb{X}$ by means of complete elliptic integrals,  through such a double path he establishes new modular relationships concerning the elliptic integrals. 
For, he makes the change of variable
\begin{equation*}\label{change}
x^6=\frac{1-\cos^2\xi}{1+\cos^2\xi} 
\end{equation*}
obtaining
\begin{equation}\label{start1}\tag{\ref{start}a}
\mathbb{X}=\frac{1}{3\sqrt2}\int_0^{\frac\pi2}\frac{{\rm d}\xi}{\sqrt{1-\frac12\sin^2\xi}}=\frac{1}{3\sqrt2}\mathbf{K}\left(\frac{1}{\sqrt2}\right).
\end{equation}
However also our change $x^3=u$ gains its ends, in fact:
\begin{equation}\label{start2}\tag{\ref{start}b}
\mathbb{X}=\frac{1}{3}\int_0^1\frac{{\rm d}u}{\sqrt{1-u^4}}.
\end{equation}
Furthermore Legendre, through the change $1-x^4=p x^2$, gets:
\begin{equation}\label{start22}\tag{\ref{start}c}
\mathbb{X}=\frac12\int_0^\infty\frac{{\rm d}p}{\sqrt{p(p^2+3)(p^2+4)}}.
\end{equation}
The integral in \eqref{start22} is a special case of the family at right hand side of \eqref{r1}. Legendre then considers a special case of $I_3(a,b)$ at p. 193 of \cite{leg}. Having previously stated the identity
\[
\int_0^\infty\frac{z^2}{\sqrt{1+z^{12}}}{\rm d}z=\frac12\int_2^\infty\frac{{\rm d}p}{\sqrt{p(p^2-3)(p^2-4)}}
\]
 he obtains another modular transformation for complete first kind elliptic integrals
\begin{equation}\label{p192b}
\mathbf{K}\left(\frac{\sqrt2+\sqrt[4]3}{1+\sqrt3}\right)=\sqrt{3+2\sqrt3}\,\mathbf{K}\left(\frac{1}{\sqrt2}\right).
\end{equation}
In \cite{leg2} chapter III section 23 p. 384, in order to reduce an integral like $I_4(a,b),$ he gets the identity
\[
\int_0^\infty\frac{1+z^4}{\sqrt{1+z^{12}}}{\rm d}z=\frac12\int_2^\infty\sqrt{\frac{p}{(p^2-3)(p^2-4)}}\,{\rm d}p.
\]
Our study for reducing \eqref{r} enlarges the Legendre approach to whichever value of parameters $a,\,b$ and to integrals  $I_2(a,b),\,I_5(a,b),\,I_6(a,b)$, whose reduction, as far as we are concerned, is presently not available in literature.

\section{Reduction of integrals \eqref{r} to elliptic ones}

For, let us start with the change of variable $p>0$
\begin{equation}\label{cambio}
f(p)=:p+\frac{ab}{p}=u\iff p=\frac{1}{2}\left(u\mp\sqrt{u^2-4ab}\right)
\end{equation}
It should be observed that, such a transformation used by Legendre, nowadays is called as {\it Cauchy-Schl{\"o}milch transformation} and it is a powerful tool in order to evaluate definite integrals. Recently in \cite{Schlo}, where some hystorical references are given, many integrals are evaluated by this transformation and among them there is a couple of elliptic integrals. 
The function describing it, whose plot is shown in Figure 1, defined in \eqref{cambio}, cannot be inverted: $f(p)\to\infty$ as $p\to0^{+}$ and as $p\to\infty$ so that the point $(\sqrt{ab},2\sqrt{ab})$ is an absolute minimum.
\begin{figure}[H]
\begin{center}
\scalebox{0.75}{\includegraphics{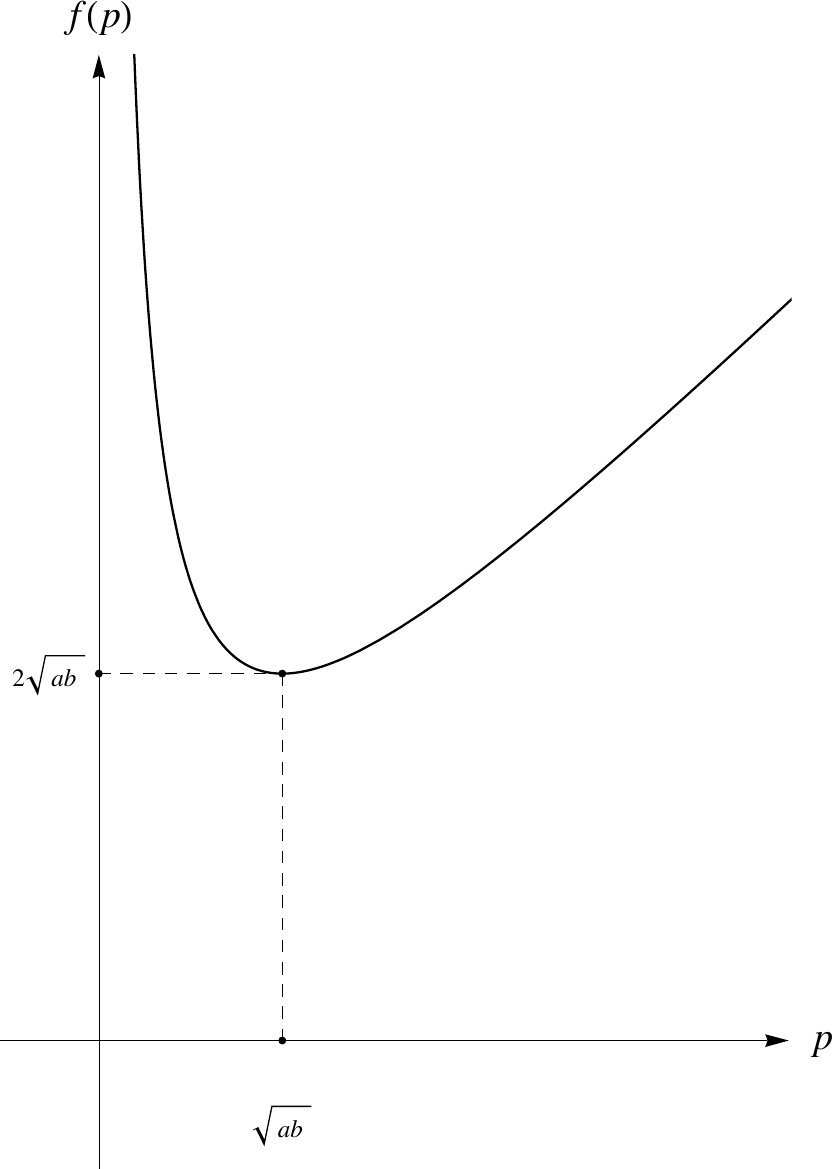}}\label{xf01} 
\end{center}
\caption{Cauchy-Schl{\"o}milch transformation}
\end{figure}
Thus, in order to handle the change of variable \eqref{cambio}, we have to split the relevant integration domain in two sub-intervals $[0,\sqrt{ab}]$ and $[\sqrt{ab},\infty).$ Let us treat apart each case in detail.

\subsection*{$\boldsymbol{p\in[\sqrt{ab},\infty)}$}
In such a case \eqref{cambio} has to been taken with the sign $+$ and then, by means of a formula on nested radicals we have:

\begin{subequations}\label{cambiopiu}
\begin{minipage}[b]{0.45\textwidth}
\begin{align}
&{\rm d}p=\frac{\sqrt{u^2-4ab}+u}{2\sqrt{u^2-4ab}}\,{\rm d}u\label{cambiopiua}
\end{align}
\end{minipage}
\hfill
\begin{minipage}[b]{0.45\textwidth}
\begin{align}
&\sqrt{p}=\frac{1}{2}\left(\sqrt{u+2\sqrt{ab}}+\sqrt{u-2\sqrt{ab}}\right).\label{cambiopiub}
\end{align}
\end{minipage}
\end{subequations}

\noindent Now we evaluate the denominators of the six integrands in \eqref{r}
\begin{subequations}\label{denompiu}
\begin{align}
(p^2+a^2)(p^2+b^2)&=\frac{1}{2}\left[u^2+(a-b)^2\right] \left(u^2-2 a b+u\sqrt{u^2-4 a b}\right)\label{piupiupiu}\\
(p^2-a^2)(p^2-b^2)&=\frac{1}{2}\left[u^2-(a+b)^2\right] \left(u^2-2 a b+u\sqrt{u^2-4 a b}\right)\label{piumenmen}
\end{align}
\end{subequations}
\noindent and again by nested radicals relationship we have:
\begin{subequations}\label{piudoppi}
\begin{align}
\sqrt{(p^2+a^2)(p^2+b^2)}&=\frac{1}{2}\left(u+\sqrt{u^2-4 a b}\right)\sqrt{u^2+(a-b)^2}\label{piudoppipiu}\\[2mm]
\sqrt{(p^2-a^2)(p^2-b^2)}&=\frac{1}{2}\left(u+\sqrt{u^2-4 a b}\right)\sqrt{u^2-(a+b)^2} \label{piudoppimen}
\end{align}
\end{subequations}

\subsection*{$\boldsymbol{p\in[0,\sqrt{ab}]}$}
In such a case \eqref{cambio} has to be taken with the sign $-$ so that we find

\begin{subequations}\label{cambiomen}
\begin{minipage}[b]{0.45\textwidth}
\begin{align}
&{\rm d}p=\frac{\sqrt{u^2-4ab}-u}{2\sqrt{u^2-4ab}}\,{\rm d}u\label{cambiomena}
\end{align}
\end{minipage}
\hfill
\begin{minipage}[b]{0.45\textwidth}
\begin{align}
&\sqrt{p}=\frac{1}{2}\left(\sqrt{u+2\sqrt{ab}}-\sqrt{u-2\sqrt{ab}}\right)\label{cambiomenb}
\end{align}
\end{minipage}
\end{subequations}

\noindent Going ahead to evaluate all the denominators of the integrands of \eqref{r}
\begin{subequations}\label{denommen}
\begin{align}
(p^2+a^2)(p^2+b^2)&=\frac{1}{2}\left[u^2+(a-b)^2\right] \left(u^2-2 a b-u\sqrt{u^2-4 a b}\right)\label{menpiupiu}\\
(p^2-a^2)(p^2-b^2)&=\frac{1}{2}\left[u^2-(a+b)^2\right] \left(u^2-2 a b-u\sqrt{u^2-4 a b}\right)\label{menmenmen}
\end{align}
\end{subequations}
then we arrive at:
\begin{subequations}\label{mendoppi}
\begin{align}
\sqrt{(p^2+a^2)(p^2+b^2)}&=\frac{1}{2}\left(u-\sqrt{u^2-4 a b}\right)\sqrt{u^2+(a-b)^2}\label{mendoppipiu}\\[2mm]
\sqrt{(p^2-a^2)(p^2-b^2)}&=\frac{1}{2}\left(u-\sqrt{u^2-4 a b}\right)\sqrt{u^2-(a+b)^2} \label{mendoppimen}
\end{align}
\end{subequations}
By means of the above changes we can get several reduction formulae.
\begin{Lemma}
If $a>b>0$ then
\begin{subequations}\label{riduzione1}
\begin{align}
I_1(a,b)&=\frac{1}{\sqrt{ab}}\int_{2\sqrt{ab}}^\infty\frac{{\rm d}u}{\sqrt{(u-2\sqrt{ab})[u^2+(a-b)^2]}}\label{I1}\\
I_2(a,b)&=\int_{2\sqrt{ab}}^\infty\frac{{\rm d}u}{\sqrt{(u-2\sqrt{ab})[u^2+(a-b)^2]}}\label{I2}\\
I_3(a,b)&=\frac{1}{2\sqrt{ab}}\int_{a+b}^\infty\frac{1}{\sqrt{u^2-(a+b)^2}}\left(\frac{1}{\sqrt{u-2\sqrt{ab}}}-\frac{1}{\sqrt{u+2\sqrt{ab}}}\right){\rm d}u\label{I3}\\
I_4(a,b)&=\frac{1}{2}\int_{a+b}^\infty\frac{1}{\sqrt{u^2-(a+b)^2}}\left(\frac{1}{\sqrt{u-2\sqrt{ab}}}+\frac{1}{\sqrt{u+2\sqrt{ab}}}\right){\rm d}u\label{I4}\\
I_5(a,b)&=\frac{1}{2\sqrt{ab}}\int_{a+b}^\infty\frac{1}{\sqrt{u^2-(a+b)^2}}\left(\frac{1}{\sqrt{u-2\sqrt{ab}}}+\frac{1}{\sqrt{u+2\sqrt{ab}}}\right){\rm d}u\label{I5}
\end{align}
\begin{align}
I_6(a,b)&=\frac{1}{2}\int_{a+b}^\infty\frac{1}{\sqrt{u^2-(a+b)^2}}\left(\frac{1}{\sqrt{u-2\sqrt{ab}}}-\frac{1}{\sqrt{u+2\sqrt{ab}}}\right){\rm d}u\label{I6}
\end{align}
\end{subequations}
\end{Lemma}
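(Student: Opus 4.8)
The plan is to drive all six integrals through the single change of variable \eqref{cambio} and then recombine the two monotonicity branches of $f(p)=p+ab/p$. First I would record the branch/endpoint dictionary. Since $a>b>0$ forces $b<\sqrt{ab}<a$, the interval $[a,\infty)$ of $I_3,I_4$ sits entirely inside the increasing branch $[\sqrt{ab},\infty)$, with $p=a\mapsto u=a+b$ and $p=\infty\mapsto u=\infty$; dually $[0,b]$ of $I_5,I_6$ sits inside the decreasing branch $[0,\sqrt{ab}]$, with $p=b\mapsto u=a+b$ and $p=0^+\mapsto u=\infty$, and there the integrand is real because $(p^2-a^2)(p^2-b^2)>0$. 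The full-line integrals $I_1,I_2$ instead must be split as $\int_0^\infty=\int_0^{\sqrt{ab}}+\int_{\sqrt{ab}}^\infty$, giving one piece on each branch, the two pieces meeting at the common value $u=2\sqrt{ab}$.

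On each branch I would substitute the quantities already computed in the excerpt: the differential \eqref{cambiopiua} or \eqref{cambiomena}, the factor $\sqrt p$ from \eqref{cambiopiub} or \eqref{cambiomenb}, and the radical $\sqrt{(p^2\pm a^2)(p^2\pm b^2)}$ from \eqref{piudoppipiu}, \eqref{piudoppimen} or \eqref{mendoppipiu}, \eqref{mendoppimen}. The decisive simplification is that the factor $u+\sqrt{u^2-4ab}$ (respectively $u-\sqrt{u^2-4ab}$) carried by $\mathrm{d}p$ is exactly the factor carried by $\sqrt{(p^2\pm a^2)(p^2\pm b^2)}$ on the same branch, so it cancels identically whether $\sqrt p$ sits in the numerator or the denominator. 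On the minus branch the identity $\sqrt{u^2-4ab}-u=-(u-\sqrt{u^2-4ab})$ produces an overall sign that I would absorb by reversing the descending limits of integration. What survives in the integrand is the nested radical $\sqrt{u+2\sqrt{ab}}\pm\sqrt{u-2\sqrt{ab}}$ coming from $\sqrt p$, together with $\sqrt{u^2+(a-b)^2}$ or $\sqrt{u^2-(a+b)^2}$.

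The final manipulation converts that nested radical into the two-term form of \eqref{I1}--\eqref{I6}. Writing $\sqrt{u^2-4ab}=\sqrt{u-2\sqrt{ab}}\,\sqrt{u+2\sqrt{ab}}$ one has
\[
\frac{\sqrt{u+2\sqrt{ab}}\pm\sqrt{u-2\sqrt{ab}}}{\sqrt{u^2-4ab}}=\frac{1}{\sqrt{u-2\sqrt{ab}}}\pm\frac{1}{\sqrt{u+2\sqrt{ab}}},
\]
which is directly what is needed when $\sqrt p$ stands in the numerator, i.e. for $I_2,I_4,I_6$. When $\sqrt p$ stands in the denominator, i.e. for $I_1,I_3,I_5$, the nested radical appears inverted, so I would first multiply by the conjugate $\sqrt{u+2\sqrt{ab}}\mp\sqrt{u-2\sqrt{ab}}$, whose product collapses to $4\sqrt{ab}$, and then apply the displayed identity; this explains the extra $1/\sqrt{ab}$ visible in \eqref{I1}, \eqref{I3}, \eqref{I5}. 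For $I_3,I_4,I_5,I_6$ the single active branch already yields the stated two-term integrand over $[a+b,\infty)$. For $I_1,I_2$ I would instead add the two branch contributions: one branch produces the combination with sign $+$ and the other with sign $-$, so the $1/\sqrt{u+2\sqrt{ab}}$ terms cancel while the $1/\sqrt{u-2\sqrt{ab}}$ terms double, collapsing the two pieces to the single-term integrals \eqref{I1}, \eqref{I2} over $[2\sqrt{ab},\infty)$.

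The step demanding the most care is the sign and orientation bookkeeping on the decreasing branch: one must track that $p$ increasing forces $u$ decreasing, that the differential \eqref{cambiomena} is negative there, and that these two reversals cancel, so that every reduced integral emerges with the correct orientation and a positive integrand. Once this is pinned down, the branch-splitting needed for $I_1,I_2$ and the conjugate rationalisation needed for the denominator cases are routine, and the six identities of \eqref{riduzione1} follow.
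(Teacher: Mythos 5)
Your proposal is correct and follows essentially the same route as the paper: split $I_1,I_2$ at $p=\sqrt{ab}$ into the two monotonicity branches of $p+ab/p$ and add the contributions (so the $1/\sqrt{u+2\sqrt{ab}}$ terms cancel), while $I_3,I_4$ live entirely on the increasing branch and $I_5,I_6$ on the decreasing one, each mapping to $[a+b,\infty)$. Your treatment of the orientation reversal and the conjugate rationalisation of $1/\sqrt{p}$ is in fact more explicit than the paper's own, which leaves those details to the reader.
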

\begin{proof}
Let us split the integration interval as $[0,\sqrt{ab}]\cup(\sqrt{ab},\infty).$ In the first one we make use of \eqref{cambiopiu} and \eqref{piupiupiu}
 \begin{equation}\label{sub1}
 \begin{split}
 \int_0^{\sqrt{ab}}\frac{{\rm d}p}{\sqrt{p(p^2+a^2)(p^2+b^2)}}&=\frac{1}{2\sqrt{ab}}\int_{2\sqrt{ab}}^\infty\frac{{\rm d}u}{\sqrt{(u+2\sqrt{ab})[u^2+(a-b)^2]}}\\
 &+\frac{1}{2\sqrt{ab}}\int_{2\sqrt{ab}}^\infty\frac{{\rm d}u}{\sqrt{(u-2\sqrt{ab})[u^2+(a-b)^2]}}
 \end{split}
 \end{equation}
Analogously in $[\sqrt{ab},\infty)$ by \eqref{cambiomen} and \eqref{menpiupiu} one finds
 \begin{equation}\label{sub2}
 \begin{split}
 \int_{\sqrt{ab}}^\infty\frac{{\rm d}p}{\sqrt{p(p^2+a^2)(p^2+b^2)}}&=\frac{1}{2\sqrt{ab}}\int_{2\sqrt{ab}}^\infty\frac{{\rm d}u}{\sqrt{(u-2\sqrt{ab})[u^2+(a-b)^2]}}\\
 &-\frac{1}{2\sqrt{ab}}\int_{2\sqrt{ab}}^\infty\frac{{\rm d}u}{\sqrt{(u+2\sqrt{ab})[u^2+(a-b)^2]}}
 \end{split}
 \end{equation}
Thesis \eqref{I1} follows joining \eqref{sub1} with \eqref{sub2}, and through the same way one will be driven to \eqref{I2}. In order to prove \eqref{I3} and \eqref{I4}, we will act along the increasing  branch of the function \eqref{cambio} invoking \eqref{cambiopiu} and \eqref{piudoppipiu}. On the contrary for \eqref{I5} and \eqref{I6}, one shall act along the decreasing  branch of \eqref{cambio} invoking \eqref{cambiomen} and \eqref{mendoppimen}.

\end{proof}
Now we are ready to compute the complete elliptic integrals at right side of \eqref{riduzione1}. For shortening we put:
\begin{equation}\label{mentari}
\mathbf{K}_{+}^{(a,b)}=\mathbf{K}\left(\frac{\sqrt{a}+\sqrt{b}}{\sqrt{2(a+b)}}\right),\qquad \mathbf{K}_{-}^{(a,b)}=\mathbf{K}\left(\frac{\sqrt{a}-\sqrt{b}}{\sqrt{2(a+b)}}\right).
\end{equation}

\begin{Thm}\label{teocompl}
The following formulae hold, $a>b>0$

\begin{subequations}\label{compl}
\begin{minipage}[b]{0.45\textwidth}
\begin{align} 
I_1(a,b)&=\frac{2}{\sqrt{ab(a+b)}}\,\mathbf{K}_{-}^{(a,b)}\label{compl1}\\
I_2(a,b)&=\frac{2}{\sqrt{a+b}}\,\mathbf{K}_{-}^{(a,b)}\label{compl2}\\
I_3(a,b)&=\frac{1}{\sqrt{2ab(a+b)}}\left[\mathbf{K}_{+}^{(a,b)}-\mathbf{K}_{-}^{(a,b)}\right]\label{compl3}
\end{align}
\end{minipage}
\hfill
\begin{minipage}[b]{0.45\textwidth}
\begin{align} 
I_4(a,b)&=\frac{1}{\sqrt{2(a+b)}}\left[\mathbf{K}_{+}^{(a,b)}+\mathbf{K}_{-}^{(a,b)}\right]\label{compl4}\\
I_5(a,b)&=\frac{1}{\sqrt{2ab(a+b)}}\left[\mathbf{K}_{+}^{(a,b)}+\mathbf{K}_{-}^{(a,b)}\right]\label{compl5}\\
I_6(a,b)&=\frac{1}{\sqrt{2(a+b)}}\left[\mathbf{K}_{+}^{(a,b)}-\mathbf{K}_{-}^{(a,b)}\right]\label{compl6}
\end{align}
\end{minipage}
\end{subequations}

\end{Thm}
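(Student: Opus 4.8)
The plan is to lean on the Lemma, which already writes each $I_j(a,b)$ as a fixed linear combination of at most three ``master'' integrals in the variable $u$, every one of them an elliptic integral of the first kind disguised as $\int \mathrm{d}u/\sqrt{g(u)}$ with $g$ a cubic. Concretely, $I_1$ and $I_2$ are governed by
\[
H=\int_{2\sqrt{ab}}^\infty\frac{\mathrm{d}u}{\sqrt{(u-2\sqrt{ab})\,[u^2+(a-b)^2]}},
\]
whose cubic has the single real root $2\sqrt{ab}$ and the complex pair $\pm i(a-b)$, while $I_3,\dots,I_6$ are governed by
\[
P=\int_{a+b}^\infty\frac{\mathrm{d}u}{\sqrt{(u^2-(a+b)^2)(u-2\sqrt{ab})}},\qquad Q=\int_{a+b}^\infty\frac{\mathrm{d}u}{\sqrt{(u^2-(a+b)^2)(u+2\sqrt{ab})}},
\]
each a cubic with three real roots. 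So the whole statement reduces to normalising $H$, $P$, $Q$ to Legendre form and reading off modulus and leading constant.

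For $P$ and $Q$ I would invoke the classical reduction of a monic cubic with three real roots $\alpha>\beta>\gamma$ integrated from the top root to infinity: the substitution $u=\gamma+(\alpha-\gamma)/\sin^2\phi$ turns $\int_\alpha^\infty \mathrm{d}u/\sqrt{(u-\alpha)(u-\beta)(u-\gamma)}$ into $(2/\sqrt{\alpha-\gamma})\,\mathbf{K}(k)$ with $k^2=(\beta-\gamma)/(\alpha-\gamma)$. For $P$ the roots are $a+b,\ 2\sqrt{ab},\ -(a+b)$, giving $k^2=(\sqrt a+\sqrt b)^2/[2(a+b)]$, i.e. the modulus of $\mathbf{K}_{+}^{(a,b)}$; for $Q$ they are $a+b,\ -2\sqrt{ab},\ -(a+b)$, giving the modulus of $\mathbf{K}_{-}^{(a,b)}$. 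In both cases $\alpha-\gamma=2(a+b)$, so $P=(\sqrt2/\sqrt{a+b})\,\mathbf{K}_{+}^{(a,b)}$ and $Q=(\sqrt2/\sqrt{a+b})\,\mathbf{K}_{-}^{(a,b)}$.

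For $H$ I would first set $A=\sqrt{(2\sqrt{ab})^2+(a-b)^2}=a+b$ and apply the Cayley-type substitution $u-2\sqrt{ab}=A\,s^2$ (namely $s=\tan(\phi/2)$ after writing $u-2\sqrt{ab}=A\tan^2(\phi/2)$), which collapses the cubic to a symmetric biquadratic and yields $H=(2/\sqrt A)\int_0^\infty \mathrm{d}s/\sqrt{s^4+2\lambda s^2+1}$ with $\lambda=2\sqrt{ab}/(a+b)$. A second substitution $s=\tan\theta$, together with $\sin^2\theta\cos^2\theta=\tfrac14\sin^2 2\theta$ and a doubling of the angle, reduces this to $\mathbf{K}(m)$ with $m^2=(1-\lambda)/2=(\sqrt a-\sqrt b)^2/[2(a+b)]$, the modulus of $\mathbf{K}_{-}^{(a,b)}$. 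Hence $H=(2/\sqrt{a+b})\,\mathbf{K}_{-}^{(a,b)}$, whence $I_2=H$ and $I_1=H/\sqrt{ab}$ follow at once from the Lemma.

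Finally I would assemble the six identities by inserting these closed forms into the combinations supplied by the Lemma, namely $I_3=(P-Q)/(2\sqrt{ab})$, $I_4=(P+Q)/2$, $I_5=(P+Q)/(2\sqrt{ab})$ and $I_6=(P-Q)/2$, which give exactly \eqref{compl3}--\eqref{compl6} after simplifying $\tfrac12\sqrt2=1/\sqrt2$. The only delicate points are bookkeeping: keeping $A$ and each modulus straight, and---most of all---checking that the factor $u-2\sqrt{ab}$ (root $+2\sqrt{ab}$) is the one producing $\mathbf{K}_{+}^{(a,b)}$ while $u+2\sqrt{ab}$ produces $\mathbf{K}_{-}^{(a,b)}$, so that the differences $P-Q$ come out positive as they must. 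I expect this sign-and-modulus matching, rather than any single integration, to be the main thing to get right.
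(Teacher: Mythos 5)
Your proposal is correct and follows essentially the same route as the paper: both feed the master integrals supplied by the Lemma into the standard reductions of $\int_\alpha^\infty \mathrm{d}x/\sqrt{(x-\alpha)(x^2+\nu^2)}$ and $\int_\alpha^\infty \mathrm{d}x/\sqrt{(x-\alpha)(x-\beta)(x-\gamma)}$, and your moduli, prefactors and sign bookkeeping all check out. The only difference is that the paper simply cites these two reductions from the tables (Gradshteyn--Ryzhik 3.138-7 and Byrd--Friedman 238.00/239.00), whereas you derive them by explicit substitutions.
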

\begin{proof}
For $I_1(a,b)$ e $I_2(a,b)$, let us start from \eqref{I1} and \eqref{I2}. By \cite{gr}, entry 3.138-7 p. 259 or \cite{by} entry 239.00 p. 86, we have:\[
\int_\alpha^\infty\frac{{\rm d}x}{\sqrt{(x-\alpha)(x^2+\nu^2)}}=\frac{2}{\sqrt[4]{\alpha^2+\nu^2}}\mathbf{K}\left(\sqrt{\frac{\sqrt{\alpha^2+\nu^2}-\alpha}{2\sqrt{\alpha^2+\nu^2}}}\right)
\]
For the elliptic integrals concerning $I_3(a,b),\dots,I_6(a,b)$ we will use entry 238.00 p. 84 of \cite{by}: if $\alpha>\beta>\gamma:$
\[
\int_\alpha^\infty\frac{{{\rm d}x}}{\sqrt{(x-\alpha)(x-\beta)(x-\gamma)}}=\frac{2}{\sqrt{\alpha-\gamma}}\mathbf{K}\left(\sqrt{\frac{\beta-\gamma}{\alpha-\gamma}}\right)
\]
\end{proof}

\section{Some new $\boldsymbol{\pi}$ formulae through hypergeometric integrals}
Integrals $I_1(a,b),\dots,I_6(a,b)$ can be evaluated by means of Lauricella multi-variables hypergeometric functions. 
We assumed a reader's previous knowledge on the elliptic functions (see \cite{by} for
details), but for his better understanding we provide here a minimal account on the Lauricella functions' huge subject.
\subsection{The Lauricella functions' tool}

It is well known that hypergeometric series first appeared in the Wallis's \textit{Arithmetica
infinitorum} (1656): 
\[
_{2}\mathrm{F}_{1}\left( \left. 
\begin{array}{c}
a;b \\[2mm]
c
\end{array}
\right| x\right)=1+\frac{a\cdot b}{1\cdot c}x+\frac{a\cdot (a+1)\cdot b%
\cdot (b+1)}{1\cdot 2\cdot c\cdot (c+1)}x^{2}+\cdots ,
\]
for $|x|<1$ and real parameters $a,\,b,\,c.$ The product of $n$ factors: 
\[
(\lambda )_{n}=\lambda \left( \lambda +1\right) \cdots \left( \lambda
+n-1\right)=\frac{\Gamma (\lambda+n)}{\Gamma (\lambda)} ,
\]
called \textit{Pochhammer symbol} (or \textit{truncated factorial})
where $ \Gamma (\cdot)$ is the Eulerian integral of second kind, allows to write $_{2}{\rm F}_{1}$ as: 
\[
_{2}\mathrm{F}_{1}\left( \left. 
\begin{array}{c}
a;b \\[2mm]
c
\end{array}
\right| x\right)=\sum_{n=0}^{\infty }\frac{\left( a\right) _{n}\left(
b\right) _{n}}{\left( c\right) _{n}}\frac{x^{n}}{n!}.
\]
The first meaningful contributions about various $_{2}{\rm F}_{1}$ representations
are due to Euler\footnote{%
Our historical references are, beyond Wallis, the following Euler works: a) \textit{De progressionibus transcendentibus}, Op.
omnia, S.1, vol. 28; b) \textit{De curva hypergeometrica} Op. omnia, S.1,
vol. 16; c) \textit{Institutiones Calculi integralis} , 1769, vol.II.};
nevertheless he doesn't seem to have known the $ _{2}{\rm F}_{1} $ integral representation: 
\[
_{2}\mathrm{F}_{1}\left( \left. 
\begin{array}{c}
a;b \\[2mm]
c
\end{array}
\right| x\right) =\frac{\Gamma (c)}{\Gamma (c-a)\Gamma (a)}\int_{0}^{1}%
\frac{u^{a-1}(1-u)^{c-a-1}}{(1-x\,u)^{b}}\,\mathrm{d}u
\]
traditionally ascribed to him, but really due to A. M. Legendre\footnote{%
A. M. Legendre, \textit{Exercices de calcul int\'{e}gral}, II, quatri\'{e}me
part, sect. 2, Paris 1811}. For all this and for the Stirling contributions,
see \cite{dutka}. The above integral relationship is true if $c>a>0$ and for 
$\left| x\right| <1,$ even if this bound can be discarded due to analytic
continuation.\ 

Many functions have been introduced in 19$^{\mathrm{th}}$ century for
generalizing the hypergeometric functions to multiple variables, but we will
mention only those introduced and investigated by G. Lauricella (1893) and
S. Saran (1954).\ Among them our interest is focused on the hypergeometric
function ${\rm F}_{D}^{(n)}$ of $n\in \mathbb{N}^{+}$ variables (and $n+2$
parameters), see \cite{saran1954} and \cite{lauricella1893}, defined as: 
\[
\mathrm{F}_{D}^{(n)}\left( \left. 
\begin{array}{c}
a;b_{1},\ldots ,b_{n} \\[2mm]
c
\end{array}
\right| x_{1},\ldots ,x_{n}\right):=
\sum_{m_{1},\ldots ,m_{n}\in \mathbb{N}}\frac{(a)_{m_{1}+\cdots
+m_{n}}(b_{1})_{m_{1}}\cdots (b_{n})_{m_{n}}}{(c)_{m_{1}+\cdots
+m_{n}}m_{1}!\cdots m_{n}!}\,x_{1}^{m_{1}}\cdots x_{n}^{m_{m}} 
\]
with the hypergeometric series usual convergence requirements $%
|x_{1}|<1,\ldots ,|x_{n}|<1$. If $\mathrm{Re}\,c>\mathrm{Re}\,a>0$, the
relevant Integral Representation Theorem $\left( \text{IRT}\right) $
provides: 
\begin{equation}\label{irto}
\mathrm{F}_{D}^{(n)}\left( \left. 
\begin{array}{c}
a;b_{1},\ldots ,b_{n} \\[2mm]
c
\end{array}
\right| x_{1},\ldots ,x_{n}\right) =\frac{%
\Gamma (c)}{\Gamma (a)\,\Gamma (c-a)}\,\int_{0}^{1}\,\frac{%
u^{a-1}(1-u)^{c-a-1}}{(1-x_{1}u)^{b_{1}}\cdots (1-x_{n}u)^{b_{n}}}\,\mathrm{d%
}u 
\end{equation}
allowing the analytic continuation to $\mathbb{C}^{n}$ deprived of the
cartesian $n$-dimensional product of the interval $]1,\infty [$ with itself. Finally, we quote the reduction formula for Lauricella functions:
\begin{equation} \label{id:3}
\mathrm{F}_{D}^{(n)}\left( \left. 
\begin{array}{c}
a;b_{1},\ldots ,b_{n} \\[2mm]
b_{1}+\cdots +b_{n}
\end{array}
\right| x_{1},\ldots ,x_{n}\right) =\frac{1}{(1-x_{n})^{a}}\,\mathrm{F}%
_{D}^{(n-1)}\left( \left. 
\begin{array}{c}
a,b_{2},\ldots ,b_{n} \\ 
b_{1}+\cdots +b_{n}
\end{array}
\right| \frac{x_{1}-x_{n}}{1-x_{n}},\ldots ,\frac{x_{n-1}-x_{n}}{1-x_{n}}%
\right)  
\end{equation}
whose proof is given in \cite{jnt2}.

It should be highlighted that, after having been introduced as a generalization within the hypergeometric framework of those of  Gauss and Appell ones, the Lauricella functions have started to be used, see \cite{exx}, in non linear Mathematical Physics and Statistics,  e.g. in order to treat some hyperelliptic integrals: see \cite{Brac} or \cite{Elas}. Moreover the cosmologist Kraniotis \cite{kraniotis2011precise, kraniotis2005frame} recurred to Lauricella functions in order to treat Kerr and Kerr-(anti) de Sitter black holes, and the bending of light in Kerr and Kerr-(anti) de Sitter spacetimes. Nevertheless their employ presently is still not so popular.

In such a way, pursuing our research path started in \cite{jnt1} and gone on with \cite{jnt2}, we will obtain new identities linking some of the Lauricella functions to complete elliptic integrals and some evaluation in their analytic continuation, theorem \ref{prolo}. Then in section 4 we will provide new values, even if in very special cases, of Lauricella functions. 

\subsection{Lauricella functions in action}
We begin with a small notation detail in order to simplify our writing:  whenever  in a Lauricella function the $n$ parameters type $b$ are all equal, $b_{1}=\cdots=b_{n}=b$, we put
\[
\mathrm{F}_{D}^{(n)}\left( \left. 
\begin{array}{c}
a;b_{1},\ldots ,b_{n} \\[2mm]
c
\end{array}
\right| x_{1},\ldots ,x_{n}\right) =\mathrm{F}_{D}^{(n)}\left( \left. 
\begin{array}{c}
a;b \\[2mm]
c
\end{array}
\right| x_{1},\ldots ,x_{n}\right)
\]
the repetitions number of $b$ is however set by the apex of the  Lauricella function.

Up to this point, we can feel able to compute the integrals $I_1(a,b),\dots,I_6(a,b)$ through the above mentioned hypergeometric integral representation theorem.
\begin{Lemma}\label{lemmalau}
If $a>b>0$ then
\begin{subequations}\label{lauri1}
\begin{align}
I_1(a,b)&=\frac{\pi}{2}\,\mathrm{F}_{D}^{(4)}\left( \left. 
\begin{array}{c}
\frac32;\frac12 \\[2mm]
2
\end{array}
\right| 1+ia,1-ia,1+ib,1-ib\right)\label{LL1}\\
I_2(a,b)&=\frac{\pi}{2}\,\mathrm{F}_{D}^{(4)}\left( \left. 
\begin{array}{c}
\frac12;\frac12 \\[2mm]
2
\end{array}
\right| 1+ia,1-ia,1+ib,1-ib\right)\label{LL2}\\
I_3(a,b)&=\frac{\pi}{2\sqrt{2a(a^2-b^2)}}\mathrm{F}_{D}^{(3)}\left( \left. 
\begin{array}{c}
\frac12;\frac12 \\[2mm]
2
\end{array}
\right| \frac12,\frac{b}{a+b},\frac{b}{b-a}\right)\label{LL3}\\
I_4(a,b)&=\pi\sqrt{\frac{a}{2(a^2-b^2)}}\mathrm{F}_{D}^{(3)}\left( \left. 
\begin{array}{c}
\frac12;\frac12 \\[2mm]
1
\end{array}
\right| \frac12,\frac{b}{a+b},\frac{b}{b-a}\right)\label{LL4}\\
I_5(a,b)&=\frac{\pi}{a\sqrt{b}}\mathrm{F}_{D}^{(3)}\left( \left. 
\begin{array}{c}
\frac12;\frac12 \\[2mm]
1
\end{array}
\right| -1,\frac{b}{a},-\frac{b}{a}\right)\label{LL5}\\
I_6(a,b)&=\frac{\pi\sqrt{b}}{2a}\mathrm{F}_{D}^{(3)}\left( \left. 
\begin{array}{c}
\frac32;\frac12 \\[2mm]
2
\end{array}
\right| -1,\frac{b}{a},-\frac{b}{a}\right)\label{LL6}
\end{align}
\end{subequations}
\end{Lemma}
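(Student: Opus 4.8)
The plan is to prove each identity by reducing the corresponding integral $I_j(a,b)$ to the kernel of the representation theorem \eqref{irto} by means of a single change of variable that carries the integration interval onto $[0,1]$. The structural remark driving the choice of substitution is that every integrand in \eqref{r} carries a lone linear factor $p$ under the root, so that $\sqrt p$ or $1/\sqrt p$ appears: a substitution rational in $p^2$ would generate quarter-powers of the new variable and destroy the $\mathrm F_D$ kernel, whereas a M\"obius substitution linear in $p$ keeps every exponent a half-integer. Since all $b_j=\tfrac12$, once the interval has been normalised to $[0,1]$ it remains only to factor the quadratics $p^2\pm a^2$ and $p^2\pm b^2$ into linear-in-$u$ factors $1-x_ju$, to read the parameters of \eqref{irto} off the exponents of $u$ and $1-u$, and to evaluate the Gamma quotient $\Gamma(a)\Gamma(c-a)/\Gamma(c)$.

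For $I_1$ and $I_2$ I would set $u=1/(1+p)$, i.e.\ $p=(1-u)/u$, which sends $[0,\infty)$ onto $(0,1]$ with $\mathrm dp=-u^{-2}\,\mathrm du$ and $\sqrt p=\sqrt{(1-u)/u}$. Then $(p^2+a^2)(p^2+b^2)=u^{-4}\big[(1-u)^2+a^2u^2\big]\big[(1-u)^2+b^2u^2\big]$, and the complex factorisation $(1-u)^2+a^2u^2=\big(1-(1+ia)u\big)\big(1-(1-ia)u\big)$ (and likewise for $b$) turns the integrand into the kernel of \eqref{irto} with the four arguments $1\pm ia,\,1\pm ib$; the exponents then fix the two Lauricella parameters as $\tfrac32,2$ for $I_1$ and as $\tfrac12,2$ for $I_2$. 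For $I_3$ and $I_4$ I would use $u=1-a/p$, i.e.\ $p=a/(1-u)$, mapping $[a,\infty)$ onto $[0,1)$; here $p^2-a^2=a^2u(2-u)(1-u)^{-2}$ and $p^2-b^2=(a-b+bu)(a+b-bu)(1-u)^{-2}$, so that $2-u=2\big(1-\tfrac12u\big)$, $a+b-bu=(a+b)\big(1-\tfrac{b}{a+b}u\big)$ and $a-b+bu=(a-b)\big(1-\tfrac{b}{b-a}u\big)$ produce exactly the three arguments $\tfrac12,\tfrac{b}{a+b},\tfrac{b}{b-a}$, with parameters $\tfrac12,2$ for $I_3$ and $\tfrac12,1$ for $I_4$. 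Finally for $I_5$ and $I_6$ the linear substitution $u=p/b$ carries $[0,b]$ onto $[0,1]$ and, through $a^2-p^2=a^2\big(1-\tfrac ba u\big)\big(1+\tfrac ba u\big)$ and $b^2-p^2=b^2(1-u)(1+u)$, yields the three arguments $-1,\tfrac ba,-\tfrac ba$ with parameters $\tfrac12,1$ for $I_5$ and $\tfrac32,2$ for $I_6$.

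In every case \eqref{irto} then identifies the resulting $\int_0^1$ with $\frac{\Gamma(a)\Gamma(c-a)}{\Gamma(c)}\,\mathrm F_D^{(n)}$; using $\Gamma(\tfrac12)=\sqrt\pi$, $\Gamma(\tfrac32)=\tfrac{\sqrt\pi}{2}$ and $\Gamma(1)=\Gamma(2)=1$, this Gamma quotient collapses to $\pi/2$ when the lower parameter is $2$ and to $\pi$ when it is $1$; multiplying by the algebraic constant extracted from the substitution (for instance $\big(2a(a^2-b^2)\big)^{-1/2}$ for $I_3$, or $1/(a\sqrt b)$ for $I_5$) reproduces the prefactors displayed in \eqref{lauri1}. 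I would also check that the hypothesis $\mathrm{Re}\,c>\mathrm{Re}\,a>0$ of \eqref{irto} holds in all six instances and that no argument lies on $[1,\infty)$: the complex ones have nonzero imaginary part, $\tfrac{b}{b-a}$ and $-\tfrac ba$ are negative, and the remaining arguments lie in $(0,1)$, so the representation applies legitimately.

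The only genuinely delicate point, and the expected main obstacle, is selecting the substitution that lands precisely on the \emph{stated} arguments for $I_3$ and $I_4$. The more immediate choice $u=a/p$ is equally valid but produces an $\mathrm F_D^{(3)}$ with arguments $-1,\tfrac ba,-\tfrac ba$ and top parameter $\tfrac32$ (the same shape as $I_6$); this agrees with \eqref{LL3}--\eqref{LL4} only after the Euler-type identity $\mathrm F_D^{(n)}(a;\mathbf b;c\mid\mathbf x)=\prod_j(1-x_j)^{-b_j}\,\mathrm F_D^{(n)}\big(c-a;\mathbf b;c\mid x_j/(x_j-1)\big)$, which is exactly the effect of the reflection $u\mapsto1-u$ inside the integral. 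Building that reflection directly into the change of variable, i.e.\ choosing $u=1-a/p$ from the outset, bypasses the transformation; what then remains is pure bookkeeping — keeping the orientation reversal, the powers of $1-u$, and the surd constants correctly aligned.
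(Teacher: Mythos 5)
Your proposal is correct and follows essentially the same route as the paper: the substitutions $p=(1-u)/u$ for $I_1,I_2$, $u=1-a/p$ (i.e.\ $t=(p-a)/p$) for $I_3,I_4$, and $u=p/b$ for $I_5,I_6$ are exactly those used in the paper's proof, followed by the same linear factorisations (including the complex one $(1-u)^2+a^2u^2=(1-(1+ia)u)(1-(1-ia)u)$) and the integral representation theorem \eqref{irto}. The closing remarks on the alternative substitution $u=a/p$ and the Euler-type reflection are extra but harmless.
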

\begin{proof}
In $I_1(a,b)$ let us do $p=(1-t)/t$ obtaining
\begin{equation*}\label{iper}
\int_0^1\frac{t^{1/2}(1-t)^{-1/2}}{\sqrt{\left[\left(1+a^2\right) t^2-2 t+1\right]\left[\left(1+b^2\right) t^2-2 t+1\right]}}\,{\rm d}t.
\end{equation*}
Thesis \eqref{LL1} stems by the integral representation theorem \eqref{irto}, if one observes that: 
\[
\left[\left(1+a^2\right) t^2-2 t+1\right]\left[\left(1+b^2\right) t^2-2 t+1\right]=\left[1-(1-ia)t\right]\left[1-(1+ia)t\right]\left[1-(1-ib)t\right]\left[1-(1+ib)t\right]
\]
The proof of \eqref{LL2} can be achieved likewise. Next, \eqref{LL3} and \eqref{LL4} can be proved by means of the change of variable $t=(p-a)/p$ which leads to
\[
\begin{split}
I_3(a,b)&=\frac{1}{\sqrt{a}}\int_0^1\sqrt{\frac{1-t}{t(2-t)[a^2-b^2(1-t^2)]}}\,{\rm d}t\\
I_4(a,b)&=\sqrt{a}\int_0^1\frac{1}{\sqrt{t(1-t)(2-t)[a^2-b^2(1-t)^2]}}\,{\rm d}t
\end{split}
\]
and hereinafter to \eqref{LL3} and \eqref{LL4} by means of the representation theorem \eqref{irto}. 
Finally, for \eqref{LL5} and \eqref{LL6} the change of variable $p=bt$ leads to:
\[
\begin{split}
I_5(a,b)&=\frac{1}{\sqrt{b}}\int_0^1\frac{{\rm d}t}{\sqrt{t(1-t^2)(a^2-b^2t^2)}}\\
I_6(a,b)&=\sqrt{b}\int_0^1\sqrt{\frac{t}{(1-t^2)(a^2-b^2t^2)}}\,{\rm d}t
\end{split}
\]
then apply again the representation theorem.
\end{proof}
Comparing theses of lemma \ref{lemmalau} and of  theorem \ref{teocompl} we obtain immediately our six new formulae for $\pi$ to be added to those of our previous articles \cite{jnt1} and \cite{jnt2}.\begin{Thm}\label{newpi}
For $a>b>0$ the following formulae hold:  
\begin{subequations}\label{pi}
\begin{align} 
\pi=&\frac{4}{\sqrt{ab(a+b)}}\,\frac{\mathbf{K}_{-}^{(a,b)}}{\mathrm{F}_{D}^{(4)}\left( \left. 
\begin{array}{c}
\frac32;\frac12 \\[2mm]
2
\end{array}
\right| 1+ia,1-ia,1+ib,1-ib\right)}\label{p1}\\
\pi=&\frac{4}{\sqrt{a+b}}\,\frac{\mathbf{K}_{-}^{(a,b)}}{\mathrm{F}_{D}^{(4)}\left( \left. 
\begin{array}{c}
\frac12;\frac12 \\[2mm]
2
\end{array}
\right| 1+ia,1-ia,1+ib,1-ib\right)}\label{p2}\\
\pi=&\frac{2\sqrt{a-b}}{\sqrt{b}}\frac{\mathbf{K}_{+}^{(a,b)}-\mathbf{K}_{-}^{(a,b)}}{\mathrm{F}_{D}^{(3)}\left( \left. 
\begin{array}{c}
\frac12;\frac12 \\[2mm]
2
\end{array}
\right| \frac12,\frac{b}{a+b},\frac{b}{b-a}\right)}\label{p3}\\
\pi=&\frac{\sqrt{a-b}}{\sqrt{a}}\frac{\mathbf{K}_{+}^{(a,b)}+\mathbf{K}_{-}^{(a,b)}}{\mathrm{F}_{D}^{(3)}\left( \left. 
\begin{array}{c}
\frac12;\frac12 \\[2mm]
1
\end{array}
\right| \frac12,\frac{b}{a+b},\frac{b}{b-a}\right)}\label{p4}\\
\pi=&\frac{\sqrt{a}}{\sqrt{2(a+b)}}\frac{\mathbf{K}_{+}^{(a,b)}+\mathbf{K}_{-}^{(a,b)}}{\mathrm{F}_{D}^{(3)}\left( \left. 
\begin{array}{c}
\frac12;\frac12 \\[2mm]
1
\end{array}
\right| -1,\frac{b}{a},-\frac{b}{a}\right)}\label{p5}\\
\pi=&\frac{\sqrt{2} a}{\sqrt{b (a+b)}}\frac{\mathbf{K}_{+}^{(a,b)}-\mathbf{K}_{-}^{(a,b)}}{\mathrm{F}_{D}^{(3)}\left( \left. 
\begin{array}{c}
\frac32;\frac12 \\[2mm]
2
\end{array}
\right| -1,\frac{b}{a},-\frac{b}{a}\right)}\label{p6}
\end{align}
 \end{subequations}

\end{Thm}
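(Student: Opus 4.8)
The plan is to observe that the statement follows by a direct comparison, requiring no new integration: each hyperelliptic integral $I_j(a,b)$ already possesses two independent closed forms at our disposal, one in terms of complete elliptic integrals (Theorem \ref{teocompl}) and one in terms of Lauricella functions (Lemma \ref{lemmalau}). Since $\pi$ enters the Lauricella representation only as a linear prefactor, while it is altogether absent from the elliptic representation, equating the two evaluations of $I_j(a,b)$ and isolating $\pi$ produces each formula in \eqref{pi}.

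Concretely, I would proceed index by index. For \eqref{p1} I set the right-hand side of \eqref{compl1} equal to that of \eqref{LL1}, obtaining
\[
\frac{2}{\sqrt{ab(a+b)}}\,\mathbf{K}_{-}^{(a,b)}=\frac{\pi}{2}\,\mathrm{F}_{D}^{(4)}\!\left(\left.\begin{array}{c}\frac32;\frac12\\[1mm]2\end{array}\right|1+ia,1-ia,1+ib,1-ib\right),
\]
and solving for $\pi$ reproduces exactly \eqref{p1}. The identical recipe applied to the pairs \eqref{compl2}/\eqref{LL2}, \eqref{compl3}/\eqref{LL3}, \eqref{compl4}/\eqref{LL4}, \eqref{compl5}/\eqref{LL5} and \eqref{compl6}/\eqref{LL6} yields \eqref{p2}--\eqref{p6} respectively.

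The only genuine computation lies in tidying the algebraic prefactors, and the single identity needed is the factorization $a^2-b^2=(a+b)(a-b)$. For instance, combining \eqref{compl3} with \eqref{LL3} the ratio of constants multiplying $\mathbf{K}_{+}^{(a,b)}-\mathbf{K}_{-}^{(a,b)}$ is $2\sqrt{2a(a^2-b^2)}\big/\sqrt{2ab(a+b)}$, which collapses to $2\sqrt{a-b}\big/\sqrt{b}$ precisely because $\sqrt{a^2-b^2}=\sqrt{a-b}\,\sqrt{a+b}$; the analogous reductions deliver the coefficients appearing in \eqref{p4}, \eqref{p5} and \eqref{p6}. I do not expect any serious obstacle here, since the substantive work has already been discharged in Theorem \ref{teocompl} and Lemma \ref{lemmalau}. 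The one point that deserves an explicit word is that the divisions are legitimate, that is, that the Lauricella values standing in the denominators of \eqref{pi} do not vanish; this is immediate, because each $I_j(a,b)$ is the integral of a strictly positive integrand over a nondegenerate interval and hence is strictly positive, so by Lemma \ref{lemmalau} the corresponding $\mathrm{F}_{D}^{(n)}$ is nonzero.
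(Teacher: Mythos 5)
Your proposal is correct and coincides with the paper's own argument: the paper proves Theorem \ref{newpi} precisely by remarking that the six formulae follow immediately on comparing the theses of Lemma \ref{lemmalau} with those of Theorem \ref{teocompl}, which is exactly your index-by-index equating of the two closed forms and solving for $\pi$ (your algebraic simplifications via $a^2-b^2=(a+b)(a-b)$ all check out). The added remark on the nonvanishing of the Lauricella denominators, via positivity of the integrands, is a small welcome supplement the paper leaves implicit.
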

By \eqref{LL1} and \eqref{LL2} of lemma \ref{lemmalau}, and by \eqref{p1} and \eqref{p2} of theorem \ref{teocompl} and by reduction formula for Lauricella functions \eqref{id:3} we can get two new evaluations of  $F_D^{(3)}$ in its analytic continuation with two arguments greater than 1, quite similar to the evaluation of $_2{\rm F}_1$ with argument 2 established in theorem 3.2 given in \cite{jnt2}.

\begin{Thm}\label{prolo}
If $a>b>0$ then
\begin{subequations}\label{continuation}
\begin{align}
\mathrm{F}_{D}^{(3)}\left( \left. 
\begin{array}{c}
\frac32;\frac12 \\[2mm]
2
\end{array}
\right|\frac{a+b}{b},\frac{b-a}{b},2\right)&=\frac{4(-1+i)}{\pi\sqrt2}\sqrt{\frac{b}{a+b}}\mathbf{K}_{-}^{(a,b)}\label{conti1}\\
\mathrm{F}_{D}^{(3)}\left( \left. 
\begin{array}{c}
\frac12;\frac12 \\[2mm]
2
\end{array}
\right|\frac{a+b}{b},\frac{b-a}{b},2\right)&=\frac{4(1+i)}{\pi\sqrt2}\sqrt{\frac{b}{a(a+b)}}\mathbf{K}_{-}^{(a,b)}\label{conti2}
\end{align}
\end{subequations}
\end{Thm}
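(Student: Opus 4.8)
The plan is to treat \eqref{conti1} and \eqref{conti2} as two instances of a single mechanism: combine the two independent evaluations of $I_1(a,b)$ (resp. $I_2(a,b)$) furnished by Lemma \ref{lemmalau} and Theorem \ref{teocompl}, so as to read off a closed value of the four–variable Lauricella function, and then use the reduction formula \eqref{id:3} to descend to the three–variable function whose value is claimed. The key structural observation enabling the last step is that in \eqref{LL1}--\eqref{LL2} the upper parameters sum to the lower one, $4\cdot\tfrac12=2$, which is exactly the hypothesis under which \eqref{id:3} applies.

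Concretely, I would first eliminate $I_1(a,b)$ between \eqref{LL1} and \eqref{compl1}, equating $\tfrac\pi2\,\mathrm{F}_D^{(4)}(\cdots)$ with $\tfrac{2}{\sqrt{ab(a+b)}}\mathbf{K}_{-}^{(a,b)}$ to obtain
\[
\mathrm{F}_{D}^{(4)}\left( \left. \begin{array}{c} \frac32;\frac12 \\[2mm] 2 \end{array} \right| 1+ia,1-ia,1+ib,1-ib\right)=\frac{4}{\pi\sqrt{ab(a+b)}}\,\mathbf{K}_{-}^{(a,b)} .
\]
Next I would apply \eqref{id:3} with $n=4$, factoring out the \emph{last} variable $x_4=1-ib$. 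A short computation gives $1-x_4=ib$ together with the three transformed arguments $\frac{x_1-x_4}{1-x_4}=\frac{a+b}{b}$, $\frac{x_2-x_4}{1-x_4}=\frac{b-a}{b}$, $\frac{x_3-x_4}{1-x_4}=2$, i.e. precisely the arguments on the left of \eqref{conti1}; this choice of factored variable is forced, since factoring any other $x_i$ would produce $a$–based arguments instead. Inverting \eqref{id:3} then yields $\mathrm{F}_D^{(3)}=(ib)^{3/2}\,\mathrm{F}_D^{(4)}$, and substituting the displayed value gives \eqref{conti1}. Running the identical argument on \eqref{LL2} and \eqref{compl2}, where the Lauricella top parameter is now $\tfrac12$ so that the reduction contributes $(ib)^{1/2}$ instead of $(ib)^{3/2}$, produces \eqref{conti2}.

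The step I expect to carry the real weight is the correct handling of the two multivalued ingredients. First, two of the output arguments, $\frac{a+b}{b}>1$ and $2>1$, lie outside the domain of convergence of the defining series, so the left–hand $\mathrm{F}_D^{(3)}$ must be read through the analytic continuation supplied by the integral representation \eqref{irto}; one must check that the whole chain $I_1\leftrightarrow \mathrm{F}_D^{(4)}\leftrightarrow \mathrm{F}_D^{(3)}$ remains legitimate, which rests on the fact that the complex arguments $1\pm ia,\,1\pm ib$ keep the integrand of \eqref{irto} free of singularities on $[0,1]$ (this is what justified \eqref{LL1}--\eqref{LL2} to begin with, the conjugate pairs rendering the integral real). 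Second, the prefactor $(1-x_4)^{-a}=(ib)^{-a}$ of \eqref{id:3} is a fractional power of a purely imaginary number, and it is exactly the selection of its principal branch, $i^{3/2}=e^{3\pi i/4}=\tfrac{-1+i}{\sqrt2}$ and $i^{1/2}=e^{i\pi/4}=\tfrac{1+i}{\sqrt2}$, that accounts for the complex constants $-1+i$ and $1+i$ appearing on the right of \eqref{conti1}--\eqref{conti2}. Once that branch is fixed, the only remaining task is the routine algebraic reduction of the real amplitude $b^{3/2}/\sqrt{ab(a+b)}$ (respectively $b^{1/2}/\sqrt{a+b}$) to the factor displayed in the statement, the delicate and genuinely non-mechanical content of the whole argument being the branch choice and the continuation bookkeeping just described.
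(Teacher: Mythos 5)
Your route is exactly the one the paper intends: the paper's entire ``proof'' of Theorem \ref{prolo} is the sentence preceding it, which says to combine \eqref{LL1}--\eqref{LL2} with \eqref{compl1}--\eqref{compl2} and then apply the reduction formula \eqref{id:3}. You implement this faithfully and correctly: eliminating $I_1$ gives $\mathrm{F}_D^{(4)}=\tfrac{4}{\pi\sqrt{ab(a+b)}}\mathbf{K}_{-}^{(a,b)}$, factoring out $x_4=1-ib$ produces precisely the arguments $\tfrac{a+b}{b},\tfrac{b-a}{b},2$, and the principal-branch values $i^{3/2}=\tfrac{-1+i}{\sqrt2}$, $i^{1/2}=\tfrac{1+i}{\sqrt2}$ account for the complex constants. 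Your remarks on the analytic continuation and the branch choice are the right things to worry about and are handled correctly.

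However, the one step you dismiss as ``routine'' is the one that does not close. You correctly arrive at the real amplitudes $b^{3/2}/\sqrt{ab(a+b)}$ and $b^{1/2}/\sqrt{a+b}$, but these do \emph{not} reduce to the factors displayed in the statement: one has
\begin{equation*}
\frac{b^{3/2}}{\sqrt{ab(a+b)}}=\frac{b}{\sqrt{a(a+b)}}=\sqrt{\frac{b}{a}}\,\sqrt{\frac{b}{a+b}}\neq\sqrt{\frac{b}{a+b}},
\qquad
\frac{b^{1/2}}{\sqrt{a+b}}=\sqrt{\frac{b}{a+b}}\neq\sqrt{\frac{b}{a(a+b)}},
\end{equation*}
so your derivation yields \eqref{conti1} with an extra factor $\sqrt{b/a}$ and \eqref{conti2} with a missing factor $1/\sqrt{a}$ relative to what is printed (indeed the radical you obtain for \eqref{conti2} is the one the paper prints in \eqref{conti1}). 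Since the ingredients \eqref{LL1}--\eqref{LL2}, \eqref{compl1}--\eqref{compl2} and \eqref{id:3} all check out, the discrepancy points to misprints in the theorem's right-hand sides rather than to a flaw in the method; but as written, your claim that the displayed constants follow is false, and you should either correct the target formulae or exhibit the algebra and flag the mismatch explicitly.
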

                 
\section{Connections with the Elliptic Singular Moduli}
The singular value theory stems from the equation
\begin{equation}\label{sin}
\frac{\mathbf{K}'(k)}{\mathbf{K}(k)}=\sqrt{n}
\end{equation}
where: $n$ is a given positive integer, $\mathbf{K}(k)$ is the complete elliptic integral of first kind, $k'=\sqrt{1-k^2}$ and $\mathbf{K}'(k)=\mathbf{K}(\sqrt{1-k^2})$. Solution $\lambda^\ast(n)$ to \eqref{sin} is an algebraic number which is called {\it$n$-th singular value for the modulus $k$}. Function $\lambda^\ast(n)$ is called elliptic lambda function. Selberg and Chowla \cite{SC} proved the unproved statement of Ramanujan, \cite{R}, that for $n\in\mathbb{N},\, \mathbf{K}(\lambda^\ast(n))$ is expressibile on terms of $\Gamma$ function. For low $n$ values there are explicit results, cases $n=1,\,3,\,4$ treated in Wittaker and Watson \cite{WW}; Borwein and Borwein \cite{BB} gave $n=1,\,2,\,3.$ Moreover $n=2$ is treated by Glasser and Wood \cite{GW}. Selberg and Chowla \cite{SC}, Borwein and Zucker \cite{BZ}, Zucker \cite{Z}, expressed $\mathbf{K}(\lambda^\ast(n))$ in terms of $\Gamma$ for many values of $n$ using techniques based up lattices sums introduced by Zucker and Robertson \cite{ZR1, ZR2}. We recall, even if out of our applications, that if in \eqref{sin} $n\in\mathbb{R}$, the singular modulus can be given in terms of Jacobi Theta functions of zero argument, see \cite{BZ}
\[
k(n)=\left[\frac{\theta_2(0,q_n)}{\theta_3(0,q_n)}\right]^2
\]
where the nome is $q_n=e^{-\pi/\sqrt{n}}$ and 
where the theta functions are defined as:
\[
\theta_2(0,q)=\sum_{m=-\infty}^\infty q^{(m+\frac12)^2},\quad \theta_3(0,q)=\sum_{m=-\infty}^\infty q^{m^{2}}.
\]
The singular moduli theory is linked to our subject because the complete elliptic integrals $\mathbf{K}_{-}^{(a,b)}$ and $\mathbf{K}_{+}^{(a,b)}$ introduced in \eqref{mentari} are complementary and their values are quickly obtained by a system of equations \eqref{p3} and \eqref{p4}, obtaining
\begin{subequations}
\begin{align}
\mathbf{K}_{+}^{(a,b)}&=\frac{\pi}{2\sqrt{a-b}}\left[\frac{\sqrt{b}}{2}\,{\mathrm{F}_{D}^{(3)}\left( \left. 
\begin{array}{c}
\frac12;\frac12 \\[2mm]
2
\end{array}
\right| \frac12,\frac{b}{a+b},\frac{b}{b-a}\right)}+\sqrt{a}\,{\mathrm{F}_{D}^{(3)}\left( \left. 
\begin{array}{c}
\frac12;\frac12 \\[2mm]
1
\end{array}
\right| \frac12,\frac{b}{a+b},\frac{b}{b-a}\right)}\right]\label{kpiu1}\\
\mathbf{K}_{-}^{(a,b)}&=\frac{\pi}{2\sqrt{a-b}}\left[\sqrt{a}\,{\mathrm{F}_{D}^{(3)}\left( \left. 
\begin{array}{c}
\frac12;\frac12 \\[2mm]
1
\end{array}
\right| \frac12,\frac{b}{a+b},\frac{b}{b-a}\right)}-\frac{\sqrt{b}}{2}\,{\mathrm{F}_{D}^{(3)}\left( \left. 
\begin{array}{c}
\frac12;\frac12 \\[2mm]
2
\end{array}
\right| \frac12,\frac{b}{a+b},\frac{b}{b-a}\right)}\right]\label{kpiu2}
\end{align}
\end{subequations}
In such a way, having computed the ratio between \eqref{kpiu1} and \eqref{kpiu2} we get:
\begin{equation}\label{quozi}
\frac{\mathbf{K}_{+}^{(a,b)}}{\mathbf{K}_{-}^{(a,b)}}=\frac{\frac{\sqrt{b}}{2}\,{\mathrm{F}_{D}^{(3)}\left( \left. 
\begin{array}{c}
\frac12;\frac12 \\[2mm]
2
\end{array}
\right| \frac12,\frac{b}{a+b},\frac{b}{b-a}\right)}+\sqrt{a}\,{\mathrm{F}_{D}^{(3)}\left( \left. 
\begin{array}{c}
\frac12;\frac12 \\[2mm]
1
\end{array}
\right| \frac12,\frac{b}{a+b},\frac{b}{b-a}\right)}}{\sqrt{a}\,{\mathrm{F}_{D}^{(3)}\left( \left. 
\begin{array}{c}
\frac12;\frac12 \\[2mm]
1
\end{array}
\right| \frac12,\frac{b}{a+b},\frac{b}{b-a}\right)}-\frac{\sqrt{b}}{2}\,{\mathrm{F}_{D}^{(3)}\left( \left. 
\begin{array}{c}
\frac12;\frac12 \\[2mm]
2
\end{array}
\right| \frac12,\frac{b}{a+b},\frac{b}{b-a}\right)}}.
\end{equation}
In similar way by a system of \eqref{p5} and \eqref{p6} we get:
\begin{equation}\label{quozi2}
\frac{\mathbf{K}_{+}^{(a,b)}}{\mathbf{K}_{-}^{(a,b)}}=\frac{2\sqrt{a(a+b)}\,{\mathrm{F}_{D}^{(3)}\left( \left. 
\begin{array}{c}
\frac12;\frac12 \\[2mm]
1
\end{array}
\right| -1,\frac{b}{a},-\frac{b}{a}\right)}+\sqrt{b(a+b)}\,{\mathrm{F}_{D}^{(3)}\left( \left. 
\begin{array}{c}
\frac32;\frac12 \\[2mm]
2
\end{array}
\right| -1,\frac{b}{a},-\frac{b}{a}\right)}}{2\sqrt{a(a+b)}\,{\mathrm{F}_{D}^{(3)}\left( \left. 
\begin{array}{c}
\frac12;\frac12 \\[2mm]
1
\end{array}
\right| -1,\frac{b}{a},-\frac{b}{a}\right)}-\sqrt{b(a+b)}\,{\mathrm{F}_{D}^{(3)}\left( \left. 
\begin{array}{c}
\frac32;\frac12 \\[2mm]
2
\end{array}
\right| -1,\frac{b}{a},-\frac{b}{a}\right)}}.
\end{equation}
On the other side, some singular moduli have been tabulated, see \cite{BB} and \cite{wei}, then by choosing properly the parameters $a$ and $b$, the left hand side of \eqref{quozi} and \eqref{quozi2} is an algebraic radical. In such a way we can provide new relationships met by the Lauricella's, having fixed the parameters for the chosen arguments. Let us show those of them leading to simpler or, better, to not too intricate, results. Singular modulus of order  3: equating to the modulus of $\mathbf{K}_{-}^{(a,b)}$ we get the equation
\[
\lambda^\ast(3)=\frac{\sqrt{3}-1}{2 \sqrt{2}}=\frac{\sqrt{a}-\sqrt{b}}{\sqrt{2(a+b)}}
\]
which, solved for $a/b$ gives $a/b=3.$ Because both sides of \eqref{quozi} give $\sqrt3$, it follows that
\begin{equation}\label{sqrt3}
{\mathrm{F}_{D}^{(3)}\left( \left. 
\begin{array}{c}
\frac12;\frac12 \\[2mm]
2
\end{array}
\right| \frac12,\frac{1}{4},-\frac{1}{2}\right)}=2 \sqrt{3} \left(2 -\sqrt{3}\right)\,{\mathrm{F}_{D}^{(3)}\left( \left. 
\begin{array}{c}
\frac12;\frac12 \\[2mm]
1
\end{array}
\right| \frac12,\frac{1}{4},-\frac{1}{2}\right)}.
\end{equation} 
While from \eqref{quozi2} one finds:
\begin{equation}\label{sqrt3b}
{\mathrm{F}_{D}^{(3)}\left( \left. 
\begin{array}{c}
\frac32;\frac12 \\[2mm]
2
\end{array}
\right| -1,\frac{1}{3},-\frac{1}{3}\right)}=2\sqrt{3} \left(2 -\sqrt{3}\right)\,{\mathrm{F}_{D}^{(3)}\left( \left. 
\begin{array}{c}
\frac12;\frac12 \\[2mm]
1
\end{array}
\right| -1,\frac{1}{3},-\frac{1}{3}\right)}.
\end{equation} 
In such a way we can find a sequence of values to parameters $a,\,b$ whom singular moduli are corresponding to, as listed in the table at end of the section, by which some identities can be got concerning the functions $\mathrm{F}_{D}^{(3)}$ similar to \eqref{sqrt3} and \eqref{sqrt3b} according to different $n-$orders. To shorten our expressions we define
\[
{\rm H}_{1}(x,y,z)={\mathrm{F}_{D}^{(3)}\left( \left. 
\begin{array}{c}
\frac12;\frac12 \\[2mm]
2
\end{array}
\right| x,y,z\right)},\quad {\rm G}(x,y,z)={\mathrm{F}_{D}^{(3)}\left( \left. 
\begin{array}{c}
\frac12;\frac12 \\[2mm]
1
\end{array}
\right| x,y,z\right)},\quad {\rm H}_{2}(x,y,z)={\mathrm{F}_{D}^{(3)}\left( \left. 
\begin{array}{c}
\frac32;\frac12 \\[2mm]
2
\end{array}
\right| x,y,z\right)}
\]
Then identities of the form
\[
{\rm H}_{1}(x,y,z)=R\,{\rm G}(x,y,z),\quad 
\]
hold, where:
\begin{description}
\item Order $5$:
\[
x=\frac12,\,y=\frac{3-\sqrt{5}}{2},\,z=-\frac{1+\sqrt{5}}{2};\quad R=2\sqrt{\sqrt{5}-2}
\]
\item Order $7$:
\[
x=\frac12,\,y=\frac{7}{16},\,z=-\frac{7}{2}; \quad R=\frac{2 \left(4-\sqrt{7}\right)}{\sqrt{7}}
\]
\item Order $9$:
\[
x=\frac12,\,y=2\sqrt3-3,\,z=-(2\sqrt3+3);\quad R=\frac{\sqrt2}{\sqrt[4]{3}}
\]
\item Order $13$:
\[
x=\frac12,\,y=\frac{19-5 \sqrt{13}}{2},\,z=-\frac{17+5\sqrt{13}}{2};\quad R=\frac{2\sqrt{37 \sqrt{13}-106}}{9}
\]
\item Order $15$:
\[
x=\frac12,\,y=\frac{3}{32} \left(3+\sqrt{5}\right),\,z=-\frac{3}{2} \left(9+4\sqrt{5}\right);\quad R=\frac{2 \left(\sqrt{3}-3 \sqrt{5}\right) \left(\sqrt{5}-4\right)}{3\left(1+\sqrt{15}\right)}
\]
\item Order $25$
\[
x=\frac12,\,y=4 \left(9 \sqrt{5}-20\right),\,z=-4 \left(20+9 \sqrt{5}\right);\quad R=\frac{2}{\sqrt[4]{5}}
\]
\item Order $33$
\[
x=\frac12,\,y=\frac{16}{16+15 \sqrt{3}-3 \sqrt{11}},\,z=\frac{16}{16-15 \sqrt{3}+3
   \sqrt{11}};\quad R=\frac{\sqrt{3} \left(17-\sqrt{33}\right) \sqrt[4]{43-5 \sqrt{33}}}{2^{19/4}}
\]
\end{description}
Eventually we have further identities of the form:
\[
{\rm H}_{2}(x,y,z)=R\,{\rm G}(x,y,z),\quad 
\]
where:
\begin{description}
\item Order $5$:
\[
x= -1,\,y=\frac{\sqrt{5}-1}{2},\,z=-\frac{\sqrt{5}-1}{2};\quad R=2 \sqrt{\sqrt{5}-2}
\]
\item Order $7$:
\[
x= -1,\,y=\frac{7}{9},\,z=-\frac{7}{9};\quad R=\frac{2 \left(4-\sqrt{7}\right)}{\sqrt{7}}
\]
\item Order $9$:
\[
x= -1,\,y=\frac{\sqrt3}{2},\,z=-\frac{\sqrt3}{2};\quad R=\frac{\sqrt2}{\sqrt[4]{3}}
\]
\item Order $13$:
\[
x= -1,\,y=\frac{5 \sqrt{13}-1}{18},\,z=-\frac{5 \sqrt{13}-1}{18};\quad R=\frac{2\sqrt{37 \sqrt{13}-106}}{9}
\]
\item Order $15$:
\[
x= -1,\,y=\frac{3}{121} \left(21+8 \sqrt{5}\right),\,z=-\frac{3}{121}
   \left(21+8 \sqrt{5}\right);\quad R=\frac{2 \left(\sqrt{3}-3 \sqrt{5}\right) \left(\sqrt{5}-4\right)}{3\left(1+\sqrt{15}\right)}
\]
\item Order $25$:
\[
x= -1,\,y=\frac{4\sqrt5}{9},\,z=-\frac{4\sqrt5}{9};\quad R=\frac{2}{\sqrt[4]{5}}
\]
\item Order $33$:
\[
x= -1,\,y=\frac{16}{15 \sqrt{3}-3 \sqrt{11}},\,z=-\frac{16}{15 \sqrt{3}-3 \sqrt{11}};\quad R=\frac{\sqrt{3} \left(17-\sqrt{33}\right) \sqrt[4]{43-5 \sqrt{33}}}{2^{19/4}}
\]
\end{description}
Let us provide a table with some values of the ratio $a/b$ which singular moduli are corresponding to:
\begin{table}[H]
\centering
\begin{tabular}{| l | l || l |}
  \hline                        
  $n$ & $\lambda^\ast(n)$ & $a/b$
  \\
  \hline 
  3 & $\frac{\sqrt{3}-1}{2 \sqrt{2}}$ & 3 \\
   \hline
     
  5 & $\frac{1}{2} \left(\sqrt{\sqrt{5}-1}-\sqrt{3-\sqrt{5}}\right)$ & $\frac{1}{2} \left(1+\sqrt{5}\right)$ \\
  \hline  
  7 & $\frac{3-\sqrt{7}}{4 \sqrt{2}}$ & $9/7$ \\
      \hline
   9 & $\frac{1}{2} \left(\sqrt{2}-\sqrt[4]{3}\right) \left(\sqrt{3}-1\right)$ & $2/\sqrt3$ \\
   \hline 
    13 & $\frac{1}{2} \left(\sqrt{5 \sqrt{13}-17}-\sqrt{19-5 \sqrt{13}}\right)$ & $\frac{1}{18} \left(1+5 \sqrt{13}\right)$ \\
   \hline 
    15 & $\frac{\left(2-\sqrt{3}\right) \left(3-\sqrt{5}\right) \left(\sqrt{5}-\sqrt{3}\right)}{8
   \sqrt{2}}$ & $\frac{1}{3} \left(21-8 \sqrt{5}\right)$ \\
   \hline 
   25 & $\frac{\left(3-2 \sqrt[4]{5}\right) \left(\sqrt{5}-2\right)}{\sqrt{2}}$ & $\frac{9}{4 \sqrt{5}}$\\
   \hline
   33 & $\lambda^\ast(33)$ & $\tfrac{3}{16} \left(5 \sqrt{3}-\sqrt{11}\right)$\\
   \hline
   
\end{tabular}

\[
\lambda^\ast(33)=\frac{1}{2} \left(\sqrt{261-150 \sqrt{3}-78 \sqrt{11}+45
   \sqrt{33}}-\sqrt{-259+150 \sqrt{3}+78 \sqrt{11}-45 \sqrt{33}}\right)
\]

\end{table}

\section*{Conclusions}
By extending a Legendre approach to the reduction of hyperelliptic integrals, \cite{leg, leg2} and founding upon the so called Cauchy-Schl{\"o}milch  transformation, we obtained six formulae for $\pi$ to be added to those we previously proved  in \cite{jnt1, jnt2}. Some evaluations of Lauricella functions in their analytic continuation, Theorem \ref{prolo}, formulae \eqref{continuation}, have been given.
Because some complete elliptic integrals $\mathbf{K}_{+}^{(a,b)},\,  \mathbf{K}_{-}^{(a,b)},$ introduced in equation \eqref{mentari}, and obtained by the above transformation, have complementary modules, some values are found to parameters of those families of complete elliptic integrals providing singular moduli. Therefore by means of singular moduli theory we highlighted some identities concerning Lauricella functions type ${\rm F}_D^{(3)}$  for some special values of their arguments and parameters. 

Even if our initial formulae have been obtained through elementary methods, nevertheless through the non-elementary singular moduli theory new results are allowed both on the Lauricella’s functions and about $\pi.$ as well. 
The exact meaning of the adjective {\it elementary} in a mathematical context is well explained in the book of J. Havil, \cite{havil2009gamma}, where, at pp. xx-xxi of the introduction he states:
\begin{quote}
Mathematics makes a nice distinction between the usually synonymous terms \lq\lq elementary'' and \lq\lq simple'', with \lq\lq elementary'' taken to mean that not very much mathematical knowledge is needed to read the work and \lq\lq simple'' to mean that not very much mathematical ability is needed to understand it.
\end{quote}
and whose opinion we share completely.


\end{document}